\theoremstyle{plain}
\newtheorem{theorem}{Theorem}[section]
\newtheorem{lemma}{Lemma}[section]
\newtheorem{corollary}{Corollary}[section]
\theoremstyle{definition}
\theoremstyle{remark}
\newtheorem{remark}{Remark}[section]
\DeclarePairedDelimiter{\abs}{\lvert}{\rvert} 
\DeclarePairedDelimiter{\norm}{\lVert}{\rVert}
\DeclareMathOperator{\supp}{supp}
\newcommand{\R}{\mathbb{R}}
\newcommand{\C}{\mathbb{C}}
\newcommand{\N}{\mathbb{N}}
\newcommand{\normeq}[1]{{\left\vert\kern-0.25ex\left\vert\kern-0.25ex\left\vert #1 
    \right\vert\kern-0.25ex\right\vert\kern-0.25ex\right\vert}}
\newcommand{\Rn}{{\mathbb R^n}}
\newcommand{\alphaAppell}{a}
\newcommand{\betaAppell}{b}
\title{Mass Propagation for Electromagnetic Schr\"odinger Evolutions}
\author[1]{Juan Antonio Barcel\'o} 
\author[2]{Biagio Cassano}
\author[3]{Luca Fanelli}		
\affil[1]{ETSI de Caminos, Universidad Polit\'ecnica de Madrid, 28040 Madrid, Spain; juanantonio.barcelo@upm.es}
\affil[2]{Dipartimento di Matematica, Università degli Studi di Bari Aldo Moro, Via Edoardo Orabona 4, 70125 Bari, Italy; biagio.cassano@uniba.it}
\affil[3]{Ikerbasque \& Departamento de Matem\'aticas, Universidad del Pa\'is Vasco/Euskal Herriko Unibertsitatea (UPV/EHU), Barrio Sarriena s/n, 48940, Leioa, Spain; luca.fanelli@ehu.es}
\begin{document}

\date{\small \today}

\maketitle





\begin{abstract}
  We investigate the validity of gaussian lower bounds for solutions to an electromagnetic Schr\"odinger equation with a bounded time-dependent complex electric potential and a time-independent vector magnetic potential. We prove that, if a suitable geometric condition is satisfied by the vector potential, then positive masses inside of a bounded region at a single time propagate outside the region, provided a suitable average in space-time cylinders is taken.
\end{abstract}
\footnotetext{\emph{2020 Mathematics Subject Classification}. 
35A23, 35J10, 35Q41}
\footnotetext{\emph{Keywords}. Schr\"odinger equation, Uncertainty Principle, Carleman Inequalities, Magnetic Potentials}

\section{Introduction}\label{sec.Introduction}
The study of an electromagnetic Schr\"odinger equation of the form
\begin{equation}\label{eq:main}
  \partial_t u = i (\Delta_A + V)u,
\end{equation}
where $u=u(x,t):\R^n \times  [0,1]\to\C$, and
\begin{align*} 
   &V=V(x,t)\colon \R^n\times [0,1] \to \C, \\ 
   &\Delta_A:= \nabla_A^2,\quad
  \nabla_A:=\nabla -i A,\quad A=A(x)\colon \R^n\to\Rn
\end{align*}
has been catching the interest  of the applied mathematics community for several years. The free Schr\"odinger equation (i.e. equation \eqref{eq:main} with $A\equiv V\equiv0$) is a somehow canonical example in PDE's, due to its deep connection with Fourier Analysis, which is clear by the solution formula for a datum $f\in L^2(\R^n)$
\begin{equation}\label{eq:solving}
  u(x,t):=e^{it\Delta}f(x) = (2\pi it)^{-\frac{n}{2}}e^{i\frac{|x|^2}{4t}}
  \mathcal F\left(e^{i\frac{|\cdot|^2}{4t}}f\right)\left(\frac{x}{2t}\right),
  \qquad
  \text{where }\mathcal Fg(x) :=\int_{\R^n}e^{-ix\cdot y}g(y)\,dy.
\end{equation}
A natural question, arising in Fourier Analysis, is concerned with the fastest possible simultaneous decay which a function $f$ and its Fourier transform $\hat f$ can enjoy, without being null. It is well known that Gaussians are the sharpest objects in this sense, as stated by the
 {\it Hardy's Uncertainty Principle}:

{\it if $f(x)=O\left(e^{-|x|^2/\beta^2}\right)$ and its Fourier transform
$\hat f(\xi)=O\left(e^{-4|\xi|^2/\alpha^2}\right)$, then}
\begin{align*}
  \alpha\beta<4
  &
  \Rightarrow
  f\equiv0
  \\
  \alpha\beta=4
  &
  \Rightarrow
  f\ is\ a \ constant\ multiple\ of\ e^{-\frac{|x|^2}{\beta^2}}.
\end{align*}
Formula \eqref{eq:solving} gives the intuition for the following PDE's version of the above Uncertainty Principle, which answers the analogous question about the fastest possible decay of a solutions of the Schr\"odinger Equation at two distinct times:

{\it if $u(x,0)=O\left(e^{-|x|^2/\beta^2}\right)$ and $u(x,T):=e^{iT\Delta}u(x,0)=
O\left(e^{-|x|^2/\alpha^2}\right)$, then}
\begin{align*}
  \alpha\beta<4T
  &
  \Rightarrow
  u\equiv0
  \\
  \alpha\beta=4T
  &
  \Rightarrow
  u(x,0)\ is\ a \ constant\ multiple\ of\ e^{-\left(\frac{1}{\beta^2}+\frac{i}{4T}\right)|x|^2}.
\end{align*}
The corresponding $L^2$-versions of the previous results were proved in \cite{CP} and affirm the following:
\begin{align*}
  e^{|x|^2/\beta^2}f \in L^2,\  e^{4|\xi|^2/\alpha^2}\hat f \in L^2,\
  \alpha\beta\leq4
  &
  \Rightarrow
  f\equiv0
  \\
  e^{|x|^2/\beta^2}u(x,0)\in L^2,\  e^{|x|^2/\alpha^2}e^{iT\Delta}u(x,0) \in L^2,\
  \alpha\beta\leq4T
  &
  \Rightarrow
  u\equiv0.
\end{align*}
We address the reader to \cite{BD, FS, SST, SS} as standard references about this topic.
If an electromagnetic field is present (i.e. $A$ and $V$ are not null in \eqref{eq:main}), it is usually difficult to exploit the Fourier Transform, in particular when the coefficients $A,V$ are rough. In the recent years,   
Escauriaza, Kenig, Ponce, and Vega
in the sequel of papers \cite{EKPV0,EKPV1,EKPV2,EKPV3,EKPV4, escauriaza2012uniqueness},
and  with Cowling in \cite{CEKPV} developed purely real analytical
methods to handle the above problems, which permits to obtain sharp
answers also in the case $V\neq 0$. Some analogous results have also
been obtained by the authors of the present manuscript in \cite{BFGRV,
  CF1, CF2}, in the presence of a non-trivial magnetic field.
The interested reader can also see \cite{FernV, FRRS, JLMP}, where analogous phenomena
are considered for discrete Schr\"{o}dinger evolutions. We refer to the recent survey \cite{FM} for more details and
references to further results.

\vskip0.3cm
\noindent
More recently, Agirre and Vega in \cite{agirre2019some}, inspired by the techniques of the above mentioned papers and motivated by the results in \cite{N}, answered to a similar question for the Schr\"odinger Equation, when the decay is assumed at only one time, instead of two. The main contribution in \cite{agirre2019some} is to prove the following: if a positive mass is present, for solutions of \eqref{eq:main} with $A\equiv0$ and $V$ bounded, inside of some region (a ball) at one time, then one also observes this mass outside the region, if a suitable time average is taken. This fact can be mathematically translated into a gaussian lower bound for solutions in suitable space-time cylinders. The arguments by Agirre and Vega are purely real analytical, and rely on suitable Carleman estimates.
\vskip0.3cm
\noindent
As we saw in \cite{BFGRV, CF1, CF2}, the presence of a magnetic field can produce interesting phenomena in this setting. In particular, since the fields lines are closed curves, the dispersive phenomenon and the space decay of solutions can be strongly influenced by a non-null magnetic potential, and a mathematical investigation about the relevant quantities related to $A$ has been performed, starting by \cite{fanelli2009non, FV}. In view of these considerations, the purpose of this note is to complete the results in \cite{agirre2019some}, by considering the more general model in which $A\neq0$. 
\vskip0.3cm
\noindent
Given $A=(A^1,\dots,A^n)\colon \Rn \to \Rn$ a real
vector field (\emph{magnetic potential}), we denote by $B : \Rn \to
M_{n\times n}(\R)$
the \emph{magnetic field}, namely the antisymmetric gradient of $A$,
given by
\begin{equation}\label{eq:Bdef}
  B(x)=D_x A(x)-D_x A^t(x),
  \qquad
  B_{jk}(x)=\partial_{x_j} A^k(x)- \partial_{x_k} A^j(x).
\end{equation} 
In dimension $n=3$, $B$ is identified with the vector field $\text{curl}\,A$, by the elementary properties of antisymmetric matrices.
We are now ready to state the last result of this paper.
\begin{theorem}\label{thm:main}
Let $n\geq3$, $u\in\mathcal{C}([0,1];H^1_{loc}(\mathbb{R}^n))$ be a solution of \eqref{eq:main},
and assume that
\begin{equation}\label{eq:V.bound}
\norm{V}_{L^{\infty}(\mathbb{R}^n\times[0,1])} = M_V <+\infty,
\end{equation}
\begin{equation}\label{eq:condition.A}
  \int_0^1A(sx)\,ds\in\R^n,
  \qquad
  \text{for a.e. }x\in\R^n,
\end{equation}
\begin{equation}\label{eq:decay.B}
  \norm{x^t B}_{L^\infty(\R^n)} =: M_B < +\infty.
\end{equation}
Assume moreover that there exists a fixed vector $v \in \R^n$ such that 
  \begin{equation}  \label{eq:B.null}
  v^t B(x) = 0, \quad \text{ for a.e. }x \in \R^n.
\end{equation}
Finally assume that there exist $R_0,M_u>0$,  such that $R_1 > 4(R_0+1)$ and 
\begin{equation}\label{eq:initial.mass}
\int_{\abs{x}\leq R_0} \abs{u(x,0)}^2 dx = M_u^2,
\end{equation}
\begin{equation}\label{eq:norm.bound}
  \sup_{0\leq t \leq 1} \int_{\abs{x}\leq R_1}
  \left(|u(x,t)|^2+|\nabla_A u(x,t)|^2 \right) dx =: E_u^2 < +\infty.
\end{equation}
Then, there exist $t^\ast = t^\ast(R_0, R_1, M_V, M_B,  M_u, E_u)>0$
and $C = C(M_B) >0$ such that
\begin{equation}\label{eq:thesis}
  M_u^2 \leq C\frac{ e^{C \frac{\rho^2}{t}}}{t}\int_{t/4}^{3t}
  \int_{||y|-\rho-\rho\frac{s}{t}|<4(\rho+1)\sqrt{t}}
  \left(\vert u(y,s)\vert^2 +
  s\vert \nabla_{A} u(y,s) \vert^2\right)\,dyds, \ \ \ \ \  R_0 \leq \rho \leq R_1,
\end{equation}
for any $t\in(0,t^\ast)$.
\end{theorem}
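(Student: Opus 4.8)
The plan is to strip the magnetic potential down to a \emph{bounded} one by a gauge transformation, and then run a Carleman argument in the spirit of Agirre--Vega \cite{agirre2019some}, the only genuinely new ingredient being the control of the magnetic terms generated in the Carleman commutator.

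\textbf{Step 1 (gauge reduction).} First I would introduce the radial-gauge phase $\Psi(x):=\int_0^1 x\cdot A(sx)\,ds$, which is real and well defined by \eqref{eq:condition.A}. A direct computation, using $\partial_{x_j}A^k=\partial_{x_k}A^j+B_{jk}$ and $\frac{d}{ds}\big(sA^j(sx)\big)=A^j(sx)+s\,x\cdot\nabla A^j(sx)$, gives $\nabla\Psi(x)=A(x)+\int_0^1 sB(sx)x\,ds$, so the gauged potential $\widetilde A:=A-\nabla\Psi=-\int_0^1 sB(sx)x\,ds$ enjoys three properties: (i) $|\widetilde A(x)|\le M_B$ by \eqref{eq:decay.B}, since $|B(sx)x|=|B(sx)(sx)|/s\le M_B/s$; (ii) $\widetilde A(x)\cdot x=0$ by antisymmetry of $B$; and (iii) $\widetilde A(x)\cdot v=0$ by \eqref{eq:B.null}. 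Since $\widetilde u:=e^{-i\Psi}u$ solves \eqref{eq:main} with $A$ replaced by $\widetilde A$, while $|\widetilde u|=|u|$ and $|\nabla_{\widetilde A}\widetilde u|=|\nabla_A u|$ pointwise and $\operatorname{curl}\widetilde A=B$ is unchanged, every hypothesis and the thesis \eqref{eq:thesis} are gauge invariant. Thus I may assume from now on that $A=\widetilde A$ is bounded and transversal to both the radial and the $v$ directions.

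\textbf{Step 2 (magnetic Carleman inequality).} The heart of the proof is a Carleman estimate for $L:=\partial_s-i(\Delta_A+V)$ with a Gaussian-type weight $e^{\phi}$ whose level sets are the moving spherical shells $\{\,||y|-\rho-\rho s/t|=\text{const}\,\}$, with $\phi$ large on the shell and small on $B_{R_0}\times\{s\approx 0\}$. Conjugating, $e^{\phi}Le^{-\phi}$ splits into a symmetric and a skew-symmetric part, and the estimate reduces to a coercive lower bound for the commutator acting on $f=e^{\phi}w$. The magnetic field enters only through $[\nabla_{A,j},\nabla_{A,k}]=-iB_{jk}$, producing a first-order term of the shape $(\nabla\phi)^tB\,\nabla_A f$; since $\nabla\phi$ is radial, $(\nabla\phi)^tB=c\,\widehat y^{\,t}B$ is controlled by $M_B/|y|$ through \eqref{eq:decay.B}, while $A\cdot\nabla\phi$ vanishes by the radial gauge of Step 1. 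The one remaining contribution that can be absorbed neither by the convexity of $\phi$ nor by $M_B$ is proportional to $v^tB$, hence vanishes by \eqref{eq:B.null}. The complex potential is treated perturbatively, $\|e^{\phi}Vw\|\le M_V\|e^{\phi}w\|$, and absorbed once $t$ is below a threshold $t^\ast$.

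\textbf{Step 3 (localization and conclusion).} I would then choose a space--time cutoff $\theta$ equal to $1$ on a neighbourhood of $B_{R_0}\times\{0\}$ and with $\nabla\theta,\partial_s\theta$ supported in the shell of \eqref{eq:thesis}. Then $w:=\theta u$ satisfies $Lw=[L,\theta]u=(\partial_s\theta)u-i(\Delta\theta)u-2i\nabla\theta\cdot\nabla_A u=:F$, with $|F|^2\lesssim |\nabla\theta|^2|\nabla_A u|^2+(|\partial_s\theta|^2+|\Delta\theta|^2)|u|^2$ supported in the shell, whose finiteness is guaranteed by \eqref{eq:norm.bound}: the constraint $R_1>4(R_0+1)$ keeps the shell inside $\{|y|\le R_1\}$ for $s\in[t/4,3t]$ and $t<t^\ast$, and the $t$-powers carried by the derivatives of $\theta$ reproduce exactly the $s$- and $t^{-1}$-weights appearing in \eqref{eq:thesis}. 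Applying the Carleman estimate of Step 2 to $w$, the left-hand side is bounded below, via continuity of $u$ in $L^2_{loc}$ together with \eqref{eq:initial.mass}, by a multiple of $t\,e^{2\phi_0}M_u^2$, while the right-hand side is dominated by $e^{2\phi_{\mathrm{shell}}}\iint_{\mathrm{shell}}|F|^2$. Dividing, the weight gap $\phi_{\mathrm{shell}}-\phi_0\sim C\rho^2/t$ generates the Gaussian factor $e^{C\rho^2/t}$ and yields \eqref{eq:thesis}.

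\textbf{Main obstacle.} The delicate point is Step 2: one must construct $\phi$ so that its convexity dominates the first-order magnetic perturbation \emph{uniformly} in the large Carleman parameter, and this is precisely what forces the simultaneous use of the decay \eqref{eq:decay.B} (which tames the radial component $\widehat y^{\,t}B$) and the degeneracy \eqref{eq:B.null} (which kills the otherwise non-absorbable component $v^tB$). A secondary, more bookkeeping-level difficulty is calibrating the cutoff and the weight so that the commutator errors land exactly in the averaged cylinder of \eqref{eq:thesis} and the weight gap is sharply $\rho^2/t$, while controlling the time-boundary terms and the non-self-adjointness introduced by the complex $V$.
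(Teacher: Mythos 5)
Your Step 1 is exactly the paper's first reduction (Iwatsuka's lemma, Lemma \ref{3lem:cronstrom1}): the Cr\"onstrom gauge, with the same three properties you list. The genuine gap is in Step 2, and it is structural rather than technical. You choose a weight $e^{\phi}$ whose level sets are the concentric moving shells $\{\,||y|-\rho-\rho s/t|=\mathrm{const}\,\}$, i.e.\ a \emph{radial} weight geometry. With such a weight the magnetic commutator term is $(\nabla\phi)^t B\,\overline{\nabla_A f}\propto \widehat y^{\,t}B\,\overline{\nabla_A f}$, which is controlled by \eqref{eq:decay.B} alone; no term proportional to $v^tB$ can arise, because your construction contains no distinguished direction. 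So your invocation of \eqref{eq:B.null} is vacuous inside your own framework, and if the scheme worked as described the theorem would hold with \eqref{eq:B.null} deleted --- in particular in dimension $2$ --- contradicting Remark \ref{rem:3}, where the authors explain that \eqref{eq:B.null} is crucial and conjecture 2D counterexamples. This inconsistency is a symptom of the deeper problem: Gaussian-type Carleman estimates for $i\partial_t+\Delta_A$ (Lemma \ref{lem:carleman}) are coercive only under a support condition of the form \eqref{eq:supp}, keeping the test function away from the critical point of the weight, where the leading positive commutator term $\sim \tau^3\abs{\nabla\phi}^2$ degenerates. For any weight whose center stays at the origin --- as yours does, since concentric shells are centered there --- this support condition excludes precisely the ball carrying the mass \eqref{eq:initial.mass}, so the left-hand side of the Carleman inequality can never see $M_u$.

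The mechanism the paper uses (following Agirre--Vega and EKPV), and which your proposal is missing, is a weight $e^{\tau\left|\frac{x}{R}+\varphi(t)v\right|^2}$ whose \emph{center translates along the fixed direction} $v$: at intermediate times the center is displaced by $4R$, so the ball $\{\abs{x}\le R+1\}$ simultaneously satisfies the support condition and carries the large weight $e^{8\tau}$, while near $t=0,1$ the cutoff $\eta\left(\frac{x}{R}+\varphi(t)v\right)$ annihilates everything, so no time-boundary terms occur. It is exactly this translation that produces the commutator contribution $\varphi(t)\,v^tB\,\overline{\nabla_A f}$ and hence forces hypothesis \eqref{eq:B.null}. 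A second missing ingredient is the Appell transformation (Lemma \ref{lem:appell}): the paper converts the short-time regime into a problem on the unit time interval with a large parameter $\gamma=1/t$, applies the Carleman estimate there with $\tau\sim cR^2=c\rho^2\gamma$, and only then changes variables back; the expanding shell and the factor $e^{C\rho^2/t}$ in \eqref{eq:thesis} arise as the images of the \emph{static} annulus $\{R\le\abs{x}\le R+1\}$ (the support of $\nabla\theta_R$) and of $e^{64\tau}$ under that change of variables --- they are not built into the weight, as you attempt. Finally, a smaller point: bounding the left-hand side below ``via continuity of $u$ in $L^2_{loc}$'' would make $t^\ast$ depend on the modulus of continuity of $u$; the paper instead uses the equation itself (a divergence identity) to propagate the mass quantitatively from $s=0$ to nearby times, which is what makes $t^\ast$ depend only on $R_0,R_1,M_V,M_B,M_u,E_u$ as stated.
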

\begin{remark}\label{rem:1}
Notice that the assumptions of Theorem \ref{thm:main} are gauge invariant. As for condition \eqref{eq:condition.A}, it has to be understood as a necessary local integrability condition, in order to get the freedom to choose the so called {\it Cr\"onstrom gauge} (see Section \ref{3sec:cronstrom} below). In particular, condition \eqref{eq:condition.A} is not satisfied in the case of homogeneous vector potentials $A$ of degree $-1$, which is the case when the Hamiltonian $\Delta_A$ is scaling invariant. A well known example is given by the {\it Aharonov-Bohm}-type potential $A(x) = \lambda(0,\dots,0,-x_{n},x_{n-1})/(x_{n-1}^2+x_n^2)\in\R^n$, for which the validity of Theorem \ref{thm:main} is still an open question.
\end{remark}
\begin{remark}\label{rem:2}
  The choice of the time interval $[0,1]$ does
  not lead the generality of the results. Indeed, $v\in
  C([0,T],L^2(\Rn))$ 
  is solution to \eqref{eq:main} in \mbox{$\Rn\times[0,T]$}
  if and only if $u\colon \R^n \times [0,1]\to \C$, $u(x,t)=T^{\frac{n}{4}}v(\sqrt{T}x,Tt)$ is solution to
\begin{equation*}
  \partial_t u = i(\Delta_{A_T}u + V_T (x,t) u)
  \quad \text{ in }\Rn \times [0,1],
\end{equation*} 
where
\begin{equation*} 
A_T(x)=\sqrt{T}A(\sqrt{T}x),\quad V_T(x)=T
  V(\sqrt{T}x,Tt).
\end{equation*} 
\end{remark} 
\begin{remark}\label{rem:3}
Due to assumption \eqref{eq:B.null}, which is crucial in the proof of
the main theorem, the case $n=2$ is not included in the
result. Indeed, there does not exist any non-null anti-symmetric
$2\times2$-matrix with a fixed null vector. For explicit examples in
dimension $n\geq3$, we refer to \cite{BFGRV}. This leaves an
interesting open question about the validity of Theorem \ref{thm:main}
in 2D, were our arguments do not work; we conjecture it should be possible to produce explicit counterexamples to the result.
\end{remark}

We complement Theorem \ref{thm:main} with the following results of
uniqueness for solutions to \eqref{eq:main}. They are immediate
consequences of Theorem \ref{thm:main} so their proof will be
omitted. 
\begin{corollary}
  In the assumptions of Theorem \eqref{thm:main}, let $u\in\mathcal{C}([0,1];H^1(\mathbb{R}^n))$ be a
  solution of \eqref{eq:main}.
  \begin{itemize}
  \item If there exist $(R_j)_{j \in \N}$, $R_j \to +\infty$ such
      that  for all $j \in \N$
\begin{equation*}
\lim_{t \to 0} C\frac{e^{C \frac{R_j ^2}{t}}}{t} \int_{t/4}^{3t}\int_{||y| - R_j(1+s/t)| < 4(R_j+1)\sqrt{t}} 
\left(|u(y,s)|^2+s\left|\nabla u(y,s)\right|^2\right) dy ds=0,
\end{equation*}
then $u\equiv 0$;
\item  if there
  exists $(t_j)_{j \in \N}\subset(0,t^*)$, $t_j \to 0$ such that for all $j
  \in \N$
  \begin{equation*}
    \lim_{\rho \to +\infty}
    C \frac{e^{C \frac{\rho^2}{t_j}}}{t_j} \int_{t_j/4}^{3t_j}\int_{||y| - \rho(1+s/t_j)| < 4(\rho+1)\sqrt{t_j}} 
  \left(|u(y,s)|^2+s\left|\nabla u(y,s)\right|^2 \right) dy ds=0,
\end{equation*}
then $u\equiv 0$.
\end{itemize}
\end{corollary}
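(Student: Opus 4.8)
The plan is to obtain both statements as contradiction arguments built directly on the quantitative lower bound \eqref{eq:thesis} of Theorem \ref{thm:main}: that estimate pins a fixed positive number $M_u^2$ below a space--time average of $u$, whereas each hypothesis of the corollary asserts precisely that such an average vanishes in the relevant limit.

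First I would set up the contradiction. Assume $u\not\equiv0$. Since $u\in\mathcal C([0,1];H^1(\R^n))$ solves \eqref{eq:main} with bounded electric potential (by \eqref{eq:V.bound}) and time-independent magnetic Laplacian, uniqueness of the flow forces $u(\cdot,0)\not\equiv0$; hence there is $R_0>0$ with $M_u^2:=\int_{\abs{x}\le R_0}\abs{u(x,0)}^2\,dx>0$, which is exactly \eqref{eq:initial.mass}. Moreover $u\in\mathcal C([0,1];H^1)$ guarantees that \eqref{eq:norm.bound} holds with a finite $E_u$ for every outer radius, so all hypotheses of Theorem \ref{thm:main} are met once an admissible $R_1$ is chosen. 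Before taking limits I would reconcile the two integrands, since the corollary is phrased with $\nabla u$ while \eqref{eq:thesis} carries $\nabla_A u$: exploiting gauge invariance (Remark \ref{rem:1}) and \eqref{eq:condition.A} I pass to the Cr\"onstrom gauge, in which the radial-gauge representation of $A$ together with \eqref{eq:decay.B} yields $\abs{A(x)}\le M_B$ for a.e.\ $x$; then $\abs{\nabla_A u}^2\le2\abs{\nabla u}^2+2M_B^2\abs{u}^2$, and since $s\le 3t\le 3$ on the domain of integration the right-hand side of \eqref{eq:thesis} is bounded by $C(M_B)$ times the averaged integral appearing in the corollary.

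For the first item I would fix $j$ so large that $R_j>4(R_0+1)$ and apply Theorem \ref{thm:main} with $R_1=R_j$, evaluating \eqref{eq:thesis} at the endpoint $\rho=R_1=R_j$ (note $R_j+R_j\frac st=R_j(1+s/t)$, matching the shell in the hypothesis). For every $t\in(0,t^\ast)$ the gauge bound gives $M_u^2\le C(M_B)\,\frac{e^{CR_j^2/t}}{t}\int_{t/4}^{3t}\int_{\abs{\,\abs{y}-R_j(1+s/t)}<4(R_j+1)\sqrt t}(\abs{u}^2+s\abs{\nabla u}^2)\,dy\,ds$. Letting $t\to0$ with $R_j$ fixed, the right-hand side tends to $0$ by assumption, forcing $M_u^2\le0$, a contradiction; hence $u\equiv0$.

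For the second item I would keep $R_0$ fixed and, for each $\rho>4(R_0+1)$, apply the theorem with $R_1=\rho$, evaluating \eqref{eq:thesis} at $\rho$ itself, so that $M_u^2\le C(M_B)\,\frac{e^{C\rho^2/t}}{t}\int_{t/4}^{3t}\int_{\abs{\,\abs{y}-\rho(1+s/t)}<4(\rho+1)\sqrt t}(\abs{u}^2+s\abs{\nabla u}^2)\,dy\,ds$ for $t$ below the corresponding threshold. Fixing $t=t_j$ from the given sequence and sending $\rho\to+\infty$, the assumed vanishing of the limit contradicts $M_u^2>0$, so $u\equiv0$. The one delicate point, which I expect to be the main obstacle, is the applicability of the theorem at the fixed time $t_j$ as the outer radius grows: since $t^\ast$ depends on $R_1$, one must verify that $t_j<t^\ast(R_0,\rho,\dots)$ for the radii used in the limit $\rho\to+\infty$ — this is exactly the content of the hypothesis $(t_j)\subset(0,t^\ast)$, and amounts to checking that the admissible time window does not close up before the averaged mass can be seen to decay. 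The gauge reduction $\nabla_A\mapsto\nabla$ and the passage from the local identity $M_u=0$ to global uniqueness are, by contrast, routine.
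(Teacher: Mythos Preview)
Your proposal is correct and is precisely the argument the paper has in mind: the authors omit the proof entirely, stating that both items are immediate consequences of Theorem~\ref{thm:main}, and your contradiction argument is the natural unpacking of that claim. Your attention to the two technical wrinkles---bounding $|\nabla_A u|$ by $|\nabla u|$ and $|u|$ via the Cr\"onstrom gauge, and checking that $t^\ast$ stays bounded below as $R_1\to\infty$ once $E_u$ is replaced by the global $C([0,1];H^1)$-norm---is apt and adequately handled.
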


The rest of the paper is devoted to the proof of Theorem \ref{thm:main}. 

\subsection*{Acknowledgments}
Juan Antonio Barcel\'o acknowledges the support of Ministerio de Ciencia, Innovaci\'on y Universidades of the Spanish goverment through grant MTM2017-85934-C3-3-P.
Biagio Cassano is member of GNAMPA (INDAM) and he is supported by Fondo Sociale
Europeo – Programma Operativo Nazionale Ricerca e Innovazione
2014-2020, progetto PON: progetto AIM1892920-attivit\`a 2, linea 2.1. 
\section{Preliminaries}
We start with a preliminary section, in which we show the fundamental
tools for the proof of our main theorem.

It is useful to generalise \eqref{eq:Bdef} to consider a general
time-dependent magnetic potential $A=(A^1,\dots,A^n)\colon \Rn \times
[0,1] \to \Rn$: the magnetic field $B : \Rn \times
[0,1]\to M_{n\times n}(\R)$
is then given by
\begin{equation}\label{eq:Bdef}
  B(x,t)=D_x A(x,t)-D_x A^t(x,t),
  \qquad
  B_{jk}(x,t)=\partial_{x_j} A^k(x,t)- \partial_{x_k} A^j(x,t).
\end{equation} 

\subsection{Cr\"onstrom gauge}\label{3sec:cronstrom}
Equation \eqref{eq:main} is invariant under gauge transformations, namely
 if $u$ solves \eqref{eq:main}, then, given $\varphi=\varphi(x):\R^n\to\R$,
the function $\widetilde u=e^{-i\varphi}u$ is a solution to
\begin{equation*}
  \partial_t\widetilde u= i\left(\Delta_{\widetilde A}\widetilde u+V(x,t)\widetilde u\right),
\end{equation*}
where 
$\widetilde
A=A - \nabla\varphi$. This invariance is useful since it allows us to
fix the most appropriate gauge, to simplify the computations. As in
\cite{BFGRV}, we use here the {\it Cr\"onstrom gauge} (also called
{\it transversal} or {\it Poincar\'e} gauge), which is given by the following condition
\begin{equation}\label{3eq:cronstrom3}
  x\cdot \widetilde A(x)=0 \quad \text{ for a.e. }x \in  \R^n.
\end{equation} 
If $A$ satisfies \eqref{eq:condition.A}, it is always possible, via gauge transformation, to reduce to the case in which \eqref{3eq:cronstrom3} holds, as the following classical result by \cite{I} shows.
\begin{lemma}[\cite{I}]
  \label{3lem:cronstrom1}
  Let $n\geq2$, $A=A(x)=(A^1,\dots,A^n):\R^n\to\R^n$, 
 $B := DA-DA^t$, and $\Psi(x):=x^tB(x)\in\R^n$ for all $x
 \in \R^n$.
Assume that 
\begin{equation}\label{eq:numero}
  \int_0^1\Psi(sx)\,ds\in\R^n,
  \qquad
  \int_0^1A(sx)\,ds\in\R^n,
  \qquad
  \text{for a.e. }x\in\R^n
  \end{equation}
  and denote by
  \begin{gather}
    \label{3eq:cronstrom2}
    \widetilde A(x):= -\int_0^1\Psi(sx)\,ds,
    \\ \label{3eq:varphi}
    \varphi(x):=x\cdot\int_0^1A(sx)\,ds\in\R.
  \end{gather}
  Then $B = D\widetilde A - D \widetilde A^t$,
  $\widetilde A = A -\nabla \varphi$
  and \eqref{3eq:cronstrom3} holds true.
\end{lemma}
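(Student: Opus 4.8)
The plan is to verify the three assertions --- the gauge relation $\widetilde A = A-\nabla\varphi$, the field invariance $B=D\widetilde A - D\widetilde A^t$, and the transversality \eqref{3eq:cronstrom3} --- by one-dimensional calculus along the rays $s\mapsto sx$, the integrability hypotheses \eqref{eq:numero} being exactly what is needed to make $\varphi$ and $\widetilde A$ well defined and to differentiate under the integral sign. All identities are read a.e.\ (and in the distributional sense for the field relation), since $A$ is assumed only to have the radial averages in \eqref{eq:numero} finite. The logical backbone is to establish the gauge relation first and then read off the other two as corollaries.

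\textbf{Step 1 (gauge relation).} This is the only genuinely computational step. The key pointwise identity is
\begin{equation*}
A^k(x)-\int_0^1 A^k(sx)\,ds=\int_0^1 s\,\big(x\cdot\nabla A^k\big)(sx)\,ds,
\end{equation*}
obtained by integrating $\frac{d}{ds}\big(s\,A^k(sx)\big)=A^k(sx)+s\,(x\cdot\nabla A^k)(sx)$ over $s\in[0,1]$. On the other hand, differentiating $\varphi(x)=\sum_j x_j\int_0^1 A^j(sx)\,ds$ produces $\partial_k\varphi(x)=\int_0^1 A^k(sx)\,ds+\sum_j x_j\int_0^1 s\,(\partial_k A^j)(sx)\,ds$. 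Subtracting, the zeroth-order averages cancel and the surviving first-order integrand reorganizes into the antisymmetric combination $\partial_j A^k-\partial_k A^j=B_{jk}$, so that $(A-\nabla\varphi)_k$ equals the radial integral $\int_0^1 s\sum_j x_j B_{jk}(sx)\,ds=\int_0^1 \Psi_k(sx)\,ds$, i.e.\ exactly the formula \eqref{3eq:cronstrom2} defining $\widetilde A$.

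\textbf{Step 2 (field invariance) and Step 3 (transversality).} Once $\widetilde A=A-\nabla\varphi$ is known, the field is automatically preserved: the Hessian $D(\nabla\varphi)$ is symmetric, whence $D(\nabla\varphi)-D(\nabla\varphi)^t=0$ and $D\widetilde A-D\widetilde A^t=DA-DA^t=B$. (One could instead differentiate the closed-form expression for $\widetilde A$ and antisymmetrize, but that route forces one to invoke the closedness $dB=0$, i.e.\ the cyclic Bianchi identities for $B=DA-DA^t$; the representation $\widetilde A=A-\nabla\varphi$ neatly bypasses this.) The transversality \eqref{3eq:cronstrom3} is the cheapest of all: contracting the formula for $\widetilde A$ with $x$ gives $x\cdot\widetilde A(x)=\int_0^1 s\sum_{j,k}x_jx_k B_{jk}(sx)\,ds=0$, since $x_jx_k$ is symmetric while $B_{jk}=-B_{kj}$ is antisymmetric. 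Equivalently, the same cancellation reappears as $x\cdot\nabla\varphi=x\cdot A$, which follows from $\frac{d}{dt}\varphi(tx)=x\cdot A(tx)$ after rewriting $\varphi(tx)=\sum_j x_j\int_0^t A^j(rx)\,dr$.

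The statement is classical and due to \cite{I}, so the content of the proof is this verification; the only real obstacle is one of justification rather than of ideas. Under the weak hypotheses \eqref{eq:numero} one must confirm that $\varphi$ and $\widetilde A$ are defined for a.e.\ $x$ and that the differentiations under the integral sign in Step~1 are legitimate --- which is precisely why those local integrability conditions are imposed. With them in force, the three identities hold a.e.\ (and distributionally for $B=D\widetilde A-D\widetilde A^t$), which completes the argument.
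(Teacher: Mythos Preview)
The paper does not prove this lemma itself; it simply refers to \cite[Lemma~2.2]{BFGRV}. Your argument is the standard direct verification one would find there: establish $\widetilde A=A-\nabla\varphi$ via the radial fundamental-theorem identity $A^k(x)=\int_0^1 A^k(sx)\,ds+\int_0^1 s\,(x\cdot\nabla A^k)(sx)\,ds$, then read off the field invariance (Hessian of $\varphi$ symmetric) and the transversality ($x^tBx=0$ by antisymmetry of $B$). So your approach matches the intended one.

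One caveat worth recording. Your Step~1 correctly computes
\[
(A-\nabla\varphi)_k=\int_0^1 s\sum_j x_j B_{jk}(sx)\,ds=\int_0^1\Psi_k(sx)\,ds,
\]
and then identifies this with \eqref{3eq:cronstrom2}. But \eqref{3eq:cronstrom2} reads $\widetilde A=-\int_0^1\Psi(sx)\,ds$, with a minus sign. With the paper's stated convention $\Psi_k=(x^tB)_k=\sum_j x_jB_{jk}$ your computation is the correct one, which means the minus sign in \eqref{3eq:cronstrom2} is a typo in the paper's formulation (one sees this already on $A=(0,x_1)$ in $\R^2$, where $A-\nabla\varphi=(-x_2/2,\,x_1/2)$ while $-\int_0^1\Psi(sx)\,ds=(x_2/2,\,-x_1/2)$, and the latter has curl $-B$, not $B$). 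You should flag this rather than silently absorb it. The downstream use in the proof of Theorem~\ref{thm:main} is unaffected, since the properties actually invoked there --- that $\widetilde A$ is a gauge transform of $A$, that $x\cdot\widetilde A=0$, and that $e_1\cdot\widetilde A=0$ under \eqref{eq:B.null} --- hold for $A-\nabla\varphi$ regardless of this sign.
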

See \cite[Lemma 2.2]{BFGRV} for the details.
\subsection{Appell Transformation}\label{3sec:Appell}
We generate a family of solutions to \eqref{eq:main} by means of the
following pseudoconformal transformation (Appell).
\begin{lemma}[\cite{BFGRV}, Lemma 2.7]\label{lem:appell}
  Let $A =(A^1(y,s),\dots,A^n(y,s)):\Rn \times [0,1] \to \R^{n}$,
\mbox{$V=V(y,s)$}, $F=F(y,s):\R^n \times [0,1] \to\C$, $u=u(y,s):\R^{n}\times[0,1]\to\C$ be a solution to
  \begin{equation}\label{3eq:1appell}
  \partial_s u=i\left(\Delta_Au+V(y,s)u+F(y,s)\right) \quad \text{ in  }
  \Rn \times [0,1]
\end{equation}
and define, for any $\alphaAppell,\betaAppell>0$, the function
\begin{equation}\label{3eq:appell}
  \widetilde u(x,t):=
  \left(\frac{\sqrt{\alphaAppell\betaAppell}}{\alphaAppell(1-t)+\betaAppell t}\right)^{\frac
    n2} \,
  u\left(\frac{x\sqrt{\alphaAppell\betaAppell}}{\alphaAppell(1-t)+\betaAppell
      t},\frac{t\betaAppell}{\alphaAppell(1-t)+\betaAppell t}\right)
  e^{\frac{(\alphaAppell-\betaAppell)|x|^2}{4i (\alphaAppell(1-t)+\betaAppell t)}}.
\end{equation}
Then $\widetilde u$ is a solution to
\begin{equation}\label{3eq:2appell2}
  \partial_t\widetilde u=i\left(\Delta_{\widetilde A}\widetilde u
    +\frac{(\alphaAppell-\betaAppell)\widetilde A\cdot x}
    {(\alphaAppell(1-t)+\betaAppell t)}\widetilde u+
    \widetilde V(x,t)\widetilde u+\widetilde F(x,t)\right)
  \quad \text{ in } \Rn \times [0,1],
\end{equation}
where
\begin{align}
  \label{3eq:Aappell}
  \widetilde A(x,t)
  &
  = \frac{\sqrt{\alphaAppell\betaAppell}}{\alphaAppell(1-t)+\betaAppell t}
  A\left(\frac{x\sqrt{\alphaAppell\betaAppell}}{\alphaAppell(1-t)+\betaAppell t},\frac{t\betaAppell}{\alphaAppell(1-t)+\betaAppell t}\right)
  \\
  \label{3eq:Vappell}
  \widetilde V(x,t)
  &
  = \frac{\alphaAppell\betaAppell}{(\alphaAppell(1-t)+\betaAppell t)^2}
  V\left(\frac{x\sqrt{\alphaAppell\betaAppell}}{\alphaAppell(1-t)+\betaAppell t},\frac{t\betaAppell}{\alphaAppell(1-t)+\betaAppell t}\right)
  \\
  \label{3eq:Fappell}
  \widetilde F(x,t)
  &
  = \left(\frac{\sqrt{\alphaAppell\betaAppell}}{\alphaAppell(1-t)+\betaAppell t}\right)^{\frac n2+2}
  F\left(\frac{x\sqrt{\alphaAppell\betaAppell}}{\alphaAppell(1-t)+\betaAppell t},\frac{t\betaAppell}{\alphaAppell(1-t)+\betaAppell t}\right)
  e^{\frac{(\alphaAppell-\betaAppell)|x|^2}{4i
  (\alphaAppell(1-t)+\betaAppell t)}}.
\end{align}
\end{lemma}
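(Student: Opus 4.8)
The statement is a direct verification that the pseudoconformal change of variables sends solutions to solutions, so the plan is to compute $\partial_t\widetilde u$ and $\Delta_{\widetilde A}\widetilde u$ separately by the chain rule and match them against the claimed equation \eqref{3eq:2appell2}. I would first fix notation by writing $\tau(t):=a(1-t)+bt$, $\lambda(t):=\sqrt{ab}/\tau(t)$, and recasting the transformation as $\widetilde u(x,t)=\lambda(t)^{n/2}e^{\mu(x,t)}u(y,s)$, where $y=y(x,t):=\lambda(t)x$, $s=s(t):=bt/\tau(t)$, and $\mu(x,t):=(a-b)|x|^2/(4i\tau(t))$ is the gaussian phase. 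A preliminary step records the elementary identities obtained by the chain rule: $\partial_{x_k}\bigl(h(y,s)\bigr)=\lambda\,\partial_{y_k}h$, $\partial_t y_k=\tfrac{a-b}{\tau}x_k$, $s'(t)=\lambda^2$, and $\lambda'/\lambda=(a-b)/\tau$, together with $\nabla_x\mu=c(t)\,x$ and $\partial_t\mu=ic^2|x|^2$, where $c(t):=-i(a-b)/(2\tau)$ satisfies $2ic=(a-b)/\tau$. It is also convenient to note the two algebraic simplifications $x=\lambda^{-1}y$ and $\widetilde A(x,t)\cdot x=\lambda A(y,s)\cdot x=A(y,s)\cdot y$, both immediate from the definition \eqref{3eq:Aappell} of $\widetilde A$.

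The computational engine is the covariance identity for the magnetic gradient: for any smooth $h$ and any power $p$,
\begin{equation*}
  \nabla_{\widetilde A}\bigl(\lambda^{p}e^{\mu}h(y,s)\bigr)
  =\lambda^{p}e^{\mu}\bigl(\lambda\,(\nabla_A h)(y,s)+(\nabla_x\mu)\,h\bigr),
\end{equation*}
which follows at once from $\widetilde A=\lambda A(y,s)$ and the chain rule above, the point being that the $-i\widetilde A$ contribution recombines with $\lambda\partial_{y_k}$ to reproduce the covariant derivative $\nabla_A$ at the scale of $u$. Applying it once gives $\nabla_{\widetilde A}\widetilde u=\lambda^{n/2}e^{\mu}\bigl(\lambda\nabla_A u+(\nabla_x\mu)u\bigr)$. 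Applying $\nabla_{\widetilde A}\cdot$ a second time — using the identity again on the $\lambda\nabla_A u$ contribution (which produces $\lambda^2\Delta_A u$ and a transport term), and the plain Leibniz rule on the $(\nabla_x\mu)u=c\,x\,u$ contribution (which produces, via $\nabla_x\cdot(cx)=nc$, $x=\lambda^{-1}y$ and $\widetilde A\cdot x=y\cdot A$, the zeroth-order terms $nc\,u$, $c^2|x|^2u$ and a further transport term) — yields
\begin{equation*}
  \Delta_{\widetilde A}\widetilde u
  =\lambda^{n/2}e^{\mu}\Bigl(\lambda^2\,\Delta_A u+2c\,y\cdot\nabla_A u+c^2|x|^2u+nc\,u\Bigr).
\end{equation*}

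Independently, differentiating $\widetilde u=\lambda^{n/2}e^{\mu}u(y,s)$ in $t$ and collecting the four contributions of the conformal factor, the phase, the spatial dilation and the time reparametrisation gives
\begin{equation*}
  \partial_t\widetilde u
  =\lambda^{n/2}e^{\mu}\Bigl(nic\,u+ic^2|x|^2u+2ic\,y\cdot\nabla_y u+\lambda^2\,\partial_s u\Bigr),
\end{equation*}
into which I substitute the hypothesis $\partial_s u=i(\Delta_A u+Vu+F)$. Expanding $y\cdot\nabla_A u=y\cdot\nabla_y u-i(y\cdot A)u$ in the formula for $\Delta_{\widetilde A}\widetilde u$ and forming $\partial_t\widetilde u-i\,\Delta_{\widetilde A}\widetilde u$, the second-order terms, the $c^2|x|^2u$, the $nc\,u$ and the transport terms $y\cdot\nabla_y u$ all cancel, leaving exactly $\lambda^{n/2}e^{\mu}\bigl(i\lambda^2Vu+i\lambda^2F-2c(y\cdot A)u\bigr)$. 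Recognising $\lambda^{n/2}e^{\mu}\,i\lambda^2Vu=i\widetilde V\widetilde u$ and $\lambda^{n/2}e^{\mu}\,i\lambda^2F=i\widetilde F$ through \eqref{3eq:Vappell}–\eqref{3eq:Fappell}, and $\lambda^{n/2}e^{\mu}(-2c)(y\cdot A)u=i\tfrac{a-b}{\tau}(\widetilde A\cdot x)\widetilde u$ through $2ic=(a-b)/\tau$ and $\widetilde A\cdot x=y\cdot A$, these three surviving terms are precisely the scalar, forcing and magnetic-drift terms of \eqref{3eq:2appell2}, which completes the verification.

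I expect the only genuine obstacle to be the bookkeeping in the second application of $\nabla_{\widetilde A}$: both the gaussian phase and the dilation feed first-order terms of the form $y\cdot\nabla_A u$ into $\Delta_{\widetilde A}\widetilde u$, and these must be matched against the $\partial_t y\cdot\nabla_y u$ term arising from the time derivative. The nontrivial point is that the cancellation of these transport terms is exact and leaves behind only the single term $-2c(y\cdot A)u$, which is exactly the source of the extra magnetic convection term $\tfrac{(a-b)\widetilde A\cdot x}{\tau}\widetilde u$ that has no analogue in the potential-free Appell transform; keeping track of its sign and of the factor $2ic=(a-b)/\tau$ is where care is required.
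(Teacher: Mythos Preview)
Your verification is correct: the chain-rule bookkeeping is accurate, the covariance identity for $\nabla_{\widetilde A}$ is the right organising tool, and the cancellations you describe do occur exactly as stated, leaving precisely the drift term $i\tfrac{a-b}{\tau}(\widetilde A\cdot x)\widetilde u$. One small slip: you write $\partial_t y_k=\tfrac{a-b}{\tau}x_k$, whereas in fact $\partial_t y_k=\lambda'(t)x_k=\tfrac{a-b}{\tau}\lambda x_k=\tfrac{a-b}{\tau}y_k$; since you then correctly use $2ic\,y\cdot\nabla_y u$ in the formula for $\partial_t\widetilde u$, this is only a typo and does not propagate.

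As for comparison with the paper: there is nothing to compare. The paper does not prove this lemma at all; it is quoted verbatim from \cite[Lemma~2.7]{BFGRV} and used as a black box. Your direct computation is exactly the kind of argument one would give to establish it from scratch, and is essentially the only reasonable approach.
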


\subsection{Carleman estimate}
We now show the main tool, which is a suitable Carleman estimate for the purely magnetic Schr\"odinger group $i\partial_t+\Delta_A$.
Here we adapt \cite[Lemma
3.1]{EKPV0} to allow the presence of a magnetic
potential.
\begin{lemma}\label{lem:carleman}
  Let $n\geq3$, $R>1$ and $\varphi : [0,1] \to \R$ a smooth function.
  Let $A=A(x,t):\Rn \times [0,1] \to\R^n$,
  $B:=D_x A-D_x A^t:\Rn \times [0,1] \to M_{n\times n}(\R)$,  and assume that
  there exists a fixed vector $v \in \R^n$ such that 
\begin{gather}\label{eq:cronstrocond}
  x\cdot A_t(x,t) = 0, \quad v\cdot A_t(x,t) = 0,
  \quad v^t B(x,t) = 0 \quad
  \text{ for a.e. }x\in\R^{n}, t \in [0,1],\\
  \label{eq:decay.B.carleman}
  \norm{x^t B}_{L^\infty(\R^{n}\times [0,1])}   < +\infty.
\end{gather}
  Then,
  there exists $c = c(\norm{\varphi'}_{\infty},\norm{\varphi''}_{\infty},   \norm{x^t B}_{\infty})>0$ such that
  \begin{equation}\label{eq:Carleman}
    \frac{\tau^{3/2}}{c R^2}
    \norm*{e^{\tau\abs{\frac{x}{R}+\varphi(t) v }^2}g}_{L^2(\R^{n}\times[0,1])}
    \leq
    \norm*{e^{\tau\abs{\frac{x}{R}+\varphi(t) v }^2} (i \partial_t +
      \Delta_A)g}_{L^2(\R^{n}\times[0,1])}
  \end{equation}
  for all $\tau\geq c R^2$ and for all $g \in
  C_c^{\infty}(\Rn\times [0,1])$ with
  \begin{equation}\label{eq:supp}
    \supp g \subset \left\{ (x,t) \in \R^{n}\times [0,1] :
      \left\vert \frac{x}{R}+\varphi(t) v \right\vert \geq 1  \right\}.
  \end{equation}
\end{lemma}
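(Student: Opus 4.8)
The plan is to follow the conjugation-and-commutator scheme of \cite{EKPV0}, replacing ordinary derivatives by the covariant ones. Writing $\phi(x,t):=\tau\abs{\frac{x}{R}+\varphi(t)v}^2$ and substituting $f:=e^{\phi}g$, the estimate \eqref{eq:Carleman} is equivalent to $\frac{\tau^{3/2}}{cR^2}\|f\|\le\|Lf\|$ for the conjugated operator $L:=e^{\phi}(i\partial_t+\Delta_A)e^{-\phi}$, where now $f\in C_c^\infty$ is supported in the region \eqref{eq:supp} and the norms are those of $L^2(\R^n\times[0,1])$. Using the covariant identity $e^{\phi}\nabla_A(e^{-\phi}\,\cdot\,)=\nabla_A-\nabla\phi$ twice, I would first compute
\begin{equation*}
  Lf = i\partial_t f + \Delta_A f + (|\nabla\phi|^2-\Delta\phi)f - 2\nabla\phi\cdot\nabla_A f - i\phi_t f .
\end{equation*}
Recalling that on $L^2(\R^n\times[0,1])$ the operators $i\partial_t$ and $\Delta_A$ are symmetric while $\nabla_A$ is skew-symmetric, I would split $L=\mathcal S+\mathcal A$ into its symmetric and skew-symmetric parts, obtaining
\begin{equation*}
  \mathcal S = i\partial_t + \Delta_A + |\nabla\phi|^2, \qquad \mathcal A = -2\nabla\phi\cdot\nabla_A - \Delta\phi - i\phi_t .
\end{equation*}
The algebraic identity $\|Lf\|^2=\|\mathcal Sf\|^2+\|\mathcal Af\|^2+\langle[\mathcal S,\mathcal A]f,f\rangle$ then reduces the problem to the lower bound $\langle[\mathcal S,\mathcal A]f,f\rangle\ge \frac{c'\tau^3}{R^4}\|f\|^2$, since the two squared norms are nonnegative.

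The decisive step is the computation of the commutator. For the present weight one has $\nabla\phi=\frac{2\tau}{R}\big(\frac{x}{R}+\varphi(t)v\big)$, $\Delta\phi=\frac{2n\tau}{R^2}$ and constant Hessian $D^2\phi=\frac{2\tau}{R^2}I$. The leading contribution comes from $[|\nabla\phi|^2,-2\nabla\phi\cdot\nabla_A]$, which is the positive multiplication operator $\frac{8\tau}{R^2}|\nabla\phi|^2=\frac{32\tau^3}{R^4}\abs{\frac{x}{R}+\varphi(t)v}^2$; on the support \eqref{eq:supp} this is bounded below by $\frac{32\tau^3}{R^4}$ and furnishes the main term $\sim\frac{\tau^3}{R^4}\|f\|^2$. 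The commutator $[\Delta_A,-2\nabla\phi\cdot\nabla_A]$ contributes, through the constant Hessian, the nonnegative second-order piece $\frac{8\tau}{R^2}\|\nabla_A f\|^2$, while the time commutators $[i\partial_t,-i\phi_t]$ and $[i\partial_t,-2\nabla\phi\cdot\nabla_A]$ produce terms proportional to $\phi_{tt}\sim\tau\varphi''$ and $\nabla\phi_t\sim\tau\varphi'$; these are the source of the dependence of $c$ on $\|\varphi'\|_\infty$ and $\|\varphi''\|_\infty$, and are of lower order in $\tau$, so they can be absorbed into the main term once $\tau\ge cR^2$.

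The genuinely new point, and the main obstacle with respect to the non-magnetic estimate of \cite{EKPV0}, is the non-commutativity of covariant derivatives, $[\nabla_{A,j},\nabla_{A,k}]=-iB_{jk}$, which injects the magnetic field into $[\Delta_A,-2\nabla\phi\cdot\nabla_A]$ and $[i\partial_t,-2\nabla\phi\cdot\nabla_A]$. The key observation is that these magnetic terms are always contracted against $\nabla\phi\propto\frac{x}{R}+\varphi(t)v$, hence against the vector $\big(\frac{x}{R}+\varphi(t)v\big)^tB=\frac{1}{R}x^tB+\varphi(t)\,v^tB$. Here the hypothesis $v^tB=0$ in \eqref{eq:cronstrocond} annihilates exactly the contribution carried by the moving part $\varphi(t)v$ of the weight — the part that is not controlled by any decay assumption — while the remaining piece $\frac{1}{R}x^tB$ is bounded by $\frac{1}{R}\|x^tB\|_\infty$ thanks to \eqref{eq:decay.B.carleman}. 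Consequently the magnetic error is of size $\sim\frac{\tau}{R^2}M_B\,\|\nabla_A f\|\,\|f\|$, which by Young's inequality is absorbed into the positive second-order piece $\frac{8\tau}{R^2}\|\nabla_A f\|^2$ and into the main $\frac{\tau^3}{R^4}\|f\|^2$ term, provided $\tau\ge cR^2$ with $c$ depending on $M_B=\|x^tB\|_\infty$.

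It remains to dispose of the time-dependence of the potential: in $[i\partial_t,-2\nabla\phi\cdot\nabla_A]$ the commutator $[\partial_t,\nabla_A]=-iA_t$ generates the term $-2\nabla\phi\cdot A_t=-\frac{4\tau}{R}\big(\frac{1}{R}x\cdot A_t+\varphi(t)\,v\cdot A_t\big)$, which vanishes identically by the two conditions $x\cdot A_t=0$ and $v\cdot A_t=0$ in \eqref{eq:cronstrocond}. Collecting all the above, the commutator admits the lower bound $\langle[\mathcal S,\mathcal A]f,f\rangle\ge\frac{c'\tau^3}{R^4}\|f\|^2$ for $\tau\ge cR^2$, whence $\|Lf\|^2\ge\frac{c'\tau^3}{R^4}\|f\|^2$ and, undoing the substitution $f=e^{\phi}g$, the claimed estimate \eqref{eq:Carleman}. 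I expect the bookkeeping of the many lower-order error terms — verifying that each is genuinely subordinate to either the main $\tau^3/R^4$ term or the positive second-order term after integration by parts in space and time — to be the most delicate part of the write-up.
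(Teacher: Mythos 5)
Your proposal is correct and takes essentially the same route as the paper's own proof: the same conjugation $f=e^{\phi}g$ and symmetric/antisymmetric splitting (your $\mathcal S$ and $\mathcal A$ coincide exactly with the paper's $S_\tau$ and $-4\tau A_\tau$), the same commutator lower bound (which the paper imports as an explicit computation from \cite[Lemma 4.1]{BFGRV}), and the same use of the hypotheses --- $v^t B=0$ to reduce the magnetic contraction $\left(\frac{x}{R}+\varphi v\right)^t B$ to $\frac{1}{R}x^t B$, and $x\cdot A_t=v\cdot A_t=0$ to cancel the $A_t$ term --- followed by absorption of the lower-order errors into the $\tau^3/R^4$ and $\tau/R^2$ positive terms for $\tau\ge cR^2$.
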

\begin{remark}
In the proof of Theorem \ref{thm:main}, after using the Appell Transformation, we are reduced to an equation with a time-dependent magnetic potential. This motivates the necessity to prove the Carleman estimate \eqref{eq:Carleman} for $A=A(t,x)$ satisfying the conditions in \eqref{eq:cronstrocond}.
\end{remark}
\begin{proof}[Proof of Lemma \ref{lem:carleman}]
  Without loss of generality, we may assume \eqref{eq:cronstrocond} with $v = e_1
  = (1,0,\dots,0) \in \R^n$.
Denoting by $f = e^{\tau\abs{\frac{x}{R}+\varphi(t)e_1}^2}g$,
an explicit computation shows that
\begin{equation*}
  e^{\tau\abs{\frac{x}{R}+\varphi(t)e_1}^2}
  (i \partial_t + \Delta_A) g
  =
  S_\tau f - 4 \tau A_\tau f, 
\end{equation*}
where $S_\tau$ and $A_\tau$ are respectively the symmetric and
anti-symmetric operators
\begin{equation*}
  \begin{split}
    & S_\tau = i \partial_t + \Delta_A +
    \frac{4\tau^2}{R^2} \left\vert \frac{x}{R} + \varphi
      e_1\right\vert^2,
    \\
    & A_\tau = \frac{1}{R} \left( \frac{x}{R} + \varphi e_1 \right)
    \cdot \nabla_A + \frac{n}{2R^2} + \frac{i\varphi'}{2}
    \left(\frac{x_1}{R} + \varphi \right).
  \end{split}
\end{equation*}
We hence have
\begin{equation*}
  \begin{split}
  \norm{e^{\tau\left\vert \frac{x}{R} + \varphi e_1 \right\vert^2}
      (i\partial_t +\Delta) g}_{L^2(\R^{n} \times [0,1])}^2
    & \geq 
    -4\tau
    \langle [S_\tau,A_\tau] f ,f \rangle_{L^2(\R^{n} \times [0,1])}.
  \end{split}
\end{equation*}
An explicit computation (see \cite[Lemma 4.1]{BFGRV}) shows that
\begin{equation}\label{eq:use.me}
  \begin{split}
  &\big\Vert{e^{\tau\left\vert  \frac{x}{R} + \varphi e_1 \right\vert^2}
    (i\partial_t +\Delta) g\big\Vert}_{L^2(\R^{n} \times [0,1])}^2
  \\
  & \geq 
    \frac{32 \tau^3}{R^4} \int_0^1 \int_{\R^{n}}  \left\vert \frac{x}{R} + \varphi e_1
    \right\vert^2
    \abs{f}^2 \,dx dt
    +
    \frac{8\tau}{R^2} \int_0^1 \int_{\R^{n}}  \abs{\nabla_A f}^2 \, dxdt
  \\
    & \quad + 
    2\tau\int_0^1 \int_{\R^{n}}  \left[ \left( \frac{x_1}{R} + \varphi
      \right) \varphi''
      + \varphi'^2 \right] \abs{f}^2 \, dxdt
    +\frac{8\tau}{R} \Im \int_0^1 \int_{\R^{n}}  \varphi' (e_1 \cdot \nabla_A) f
    \overline{f} \, dxdt
  \\
    & \quad -
    \frac{8\tau}{R} \Im \int_0^1 \int_{\R^{n}}  f \left(\frac{x}{R} + \varphi e_1
    \right)^t B \overline{\nabla_A f} \,dxdt
    -\frac{4\tau}{R}\Im \int_0^1 \int_{\R^{n}} 
    \left(\frac{x}{R}+\varphi e_1\right)\cdot A_t \abs{f}^2 \,dxdt.
  \end{split}
\end{equation}
By \eqref{eq:cronstrocond}, the last term at right hand side
vanishes. Thanks to \eqref{eq:cronstrocond} and
\eqref{eq:decay.B.carleman}
we estimate
\begin{equation}\label{eq:use.me.1}
  \begin{split}
  & - \frac{8\tau}{R} \Im \int_0^1 \int_{\R^{n}}  f \left(\frac{x}{R} + \varphi e_1
  \right)^t B \overline{\nabla_A f} \,dxdt
  =
   - \frac{8\tau}{R^2} \Im \int_0^1 \int_{\R^{n}}  f  {x^t}
   B \overline{\nabla_A f} \,dxdt
  \\
  &\geq - \frac{4\tau}{R^2}\norm{x^t B}_{L^\infty(\R^{n+1})}^2
  \int_0^1 \int_{\R^{n}}  \abs{f}^2\,dxdt
  -\frac{4\tau}{R^2}\int_0^1 \int_{\R^{n}}  \abs{\nabla_A f}^2 \,dxdt.
\end{split}
\end{equation}
Also, we have
\begin{equation}\label{eq:use.me.2}
  \frac{8\tau}{R} \Im \int_0^1 \int_{\R^{n}}  \varphi' (e_1 \cdot \nabla_A) f
  \overline{f} \, dxdt
  \geq
  -{4\tau}\norm{\varphi'}_{L^\infty([0,1])}^2
  \int_0^1 \int_{\R^{n}} \abs{f}^2 \,dxdt
  -\frac{4\tau}{R^2}   \int_0^1 \int_{\R^{n}} \abs{\nabla_A f}^2 \,dxdt.
\end{equation}
From \eqref{eq:use.me}, since $R>1$ and thanks to \eqref{eq:supp}, \eqref{eq:use.me.1} and
\eqref{eq:use.me.2}, we have
\begin{equation*}
  \begin{split}
    &\big\Vert{e^{\tau\left\vert \frac{x}{R} + \varphi e_1 \right\vert^2}
      (i\partial_t +\Delta) g\big\Vert}_{L^2(\R^{n} \times [0,1])}^2 
    \\ &\geq 
    \left[    \frac{32 \tau^3}{R^4} - 2
   \tau \left(\norm{\varphi''}_{L^\infty([0,1])} +
   \norm{\varphi'}_{L^\infty([0,1])}^2
   +2  \norm{x^t B}_{\infty}^2\right) \right]
   \int_0^1 \int_{\R^{n}}  \left\vert \frac{x}{R} + \varphi e_1\right\vert
    \abs{f}^2 \,dx dt.
  \end{split}
\end{equation*}
The coefficient of the first term at right hand side is bigger than
$\sigma^3/c^2 R^4$, if $\sigma \geq c R^2$ for a suitable $c =
c(\norm{\varphi'}_{\infty}, \norm{\varphi''}_{\infty}, \norm{x^t
  B}_{\infty})>0$, therefore \eqref{eq:Carleman} follows thanks to \eqref{eq:supp}.
\end{proof}

\section{Proof of \Cref{thm:main}}
 The proof is an adaptation of the proof of \cite[Theorem 2.1]{agirre2019some}, taking into
  account the presence of a magnetic potential.
  In the following, without loss of generality, we assume that $v = e_1 = (1,0,\dots,0)
  \in \R^n$.
  \vskip0.3cm
  \noindent
\underline{\bf Reduction to the Cr\"onstrom gauge}. We start by a gauge transformation.
By Lemma \ref{3lem:cronstrom1}, thanks to assumption
\eqref{eq:condition.A} and denoting by
  \begin{equation*}
    \varphi(x):= x \cdot\int_0^1 A(sx)\,ds,
    \quad
    \widetilde A(x):=A(x) - \nabla \varphi(x)
\end{equation*}
we have that $B =DA-DA^t = D\widetilde A - D \widetilde
  A^t$, and for a.e. $x \in \R^n$ 
  \begin{gather}
    \label{eq:cronstrom.1}
    \widetilde A(x)= -\int_0^1 (sx)^t B(sx) \,ds,
\\
    x \cdot   \widetilde A(x) = 0.
  \end{gather}
Moreover, from \eqref{eq:B.null} and \eqref{eq:cronstrom.1} we see that
  \begin{equation}\label{eq:ventisette}
    e_1 \cdot \widetilde A(x)  =0, \quad \text{ for a.e. }x \in \R^n.
  \end{equation}
Let $\widetilde u := e^{-i\varphi } u$. Then
(cfr.~\Cref{3sec:cronstrom}) $\widetilde u$ is solution to 
\begin{equation}\label{eq:nuovanuova}
    \partial_t\widetilde u= i\left(\Delta_{\widetilde A}\widetilde u+
      V(x,t) \widetilde u\right)
    \quad \text{ in }\Rn \times [0,1]
\end{equation}
and the conditions in \eqref{eq:initial.mass} and \eqref{eq:norm.bound}
are true replacing $u,A$ with $\widetilde u, \widetilde A$.
\vskip0.3cm
\noindent
\underline{{\bf Appell Transformation}}. We now apply Lemma \ref{lem:appell} to solutions of \eqref{eq:nuovanuova}. To lighten the notations, in the following we will omit the tildes and just denote
$\widetilde u$, $\widetilde A$
by $u$ and $A$ in \eqref{eq:nuovanuova}.
We choose
  \begin{equation*}
    \alphaAppell, \betaAppell >0, \quad \gamma := \frac{\alphaAppell}{\betaAppell},
  \end{equation*}
  in such a way that
    \begin{equation}
    \label{eq:gamma.big}
    \gamma > \gamma^\ast := \text{max}\left(1, \frac{2}{R_0},
      \frac{ 64  E_u^2 (1+M_V)}{M_u^2}, \frac{4}{R_1 - 4R_0},
      \sqrt{\frac{M_V M_u}{2^{12} E_u}},
      \frac{2^{8} E_u}{\sqrt{c} R_0 M_u}
    \right)^2,
  \end{equation}

where $c$ is defined later in \eqref{eq:carleman.proof}. Let
\begin{equation}\label{eq:defn.v}
v(x,t) := \alpha(t)^{\frac{n}2}e^{-\frac{i}{4}\beta(t)\vert x\vert^2}u(\alpha(t)x,s(t)), \quad (x,t)\in\mathbb{R}^n\times [0,1],
\end{equation}
with
\begin{equation*}
    \alpha(t)= \frac{1}{(1-t)\sqrt{\gamma}+t/\sqrt{\gamma}}, \quad
    \beta(t)=\frac{1}{1-t+t/\gamma}-\frac{1}{\gamma(1-t)+t},
    \quad
    s(t)=\frac{t}{\gamma(1-t)+t}.
  \end{equation*}
    Thanks to \Cref{lem:appell}, $v$ is solution to 
    \begin{equation}\label{3eq:2appell}
      \partial_t v 
      = i\left(\Delta_{\widetilde A} v
        +  \widetilde V(x,t) v \right)
      \quad \text{ in }\Rn \times [0,1]
    \end{equation}
for $\widetilde A$ and $\widetilde V$  defined by 
\begin{equation}
  \label{eq:A.V.appell}
    \widetilde A(x,t)
  := \alpha(t) \, A(\alpha(t)x),
  \quad 
  \widetilde V(x,t)
  := (\alpha(t))^2 \,  V(\alpha(t) x , s(t)).
\end{equation}
We remark
that, since $A$ is in the Cr\"onstrom gauge, then $\widetilde A$
is in the Cr\"onstrom gauge too. Also, we have that
$\norm{\widetilde V}_{L^{\infty}(\R^n\times[0,1])} = \gamma M_V < +\infty$. In addition, by \eqref{eq:ventisette} and \eqref{eq:A.V.appell}, we see that
 \begin{equation}\label{eq:ventisettedue}
    e_1 \cdot \widetilde A(x,t)  = e_1\cdot \widetilde A_t(x,t)=0,
    \quad \text{ for a.e. }(x,t) \in \R^n\times [0,1].
  \end{equation}
  \underline{{\bf Conclusion of the proof}}.
Let us denote
\begin{equation}\label{eq:erre}
R := R_0 \sqrt{\gamma},
\end{equation}
so that from \eqref{eq:gamma.big} we
  have $R > 2$. We define the following auxiliary functions:
  \begin{equation*} 
    \theta_R, \eta\in\mathcal{C}^{\infty}(\mathbb{R}^n), \quad
    \theta_R(x)= 
    \begin{cases}
      1 & \text{ if }|x|\leq R \\
      0 & \text{ if }|x|\geq R+1,
    \end{cases}
  \quad
 \eta(x)= 
 \begin{cases}
   1 & \text{ if }|x|\geq 2 \\
   0 & \text{ if }|x|\leq 3/2,
 \end{cases} 
\end{equation*}
 such that for all $x\in \Rn$
\begin{align}
  \label{eq:control.derivatives.theta}
  & \abs{\theta_R}\leq 1, \quad \abs{\nabla\theta_R(x)} \leq 1,  \quad \abs{\Delta \theta_R(x)}
    \leq 2, \\
    \label{eq:control.derivatives.eta}
  & \abs{\eta(x)}\leq 1, \quad \abs{\nabla\eta(x)} \leq 2,  \quad \abs{\Delta\eta(x)} \leq
    4.
\end{align}
Moreover, let
\begin{equation*}
 \varphi\in\mathcal{C}^{\infty}([0,1]), \quad \varphi(t)= 
 \begin{cases}
   4 & \text{ if }t\in [3/8,5/8] \\
   0 & \text{ if }t\in [0,1/4]\cup [3/4,1],
 \end{cases}
\end{equation*}
 such that for all $t \in [0,1]$
\begin{equation}
  \label{eq:control.derivatives.varphi}
  \abs{\varphi(t)}\leq 4, \quad  \abs{\varphi'(t)}\leq 32.
\end{equation}
We let
\begin{equation}\label{eq:defn.g}
g(x,t) = \theta_R(x) \, \eta\left(\frac{x}{R}+\varphi(t)e_1\right)\, v(x,t),
\quad (x,t)\in\mathbb{R}^n\times [0,1].
\end{equation}
We observe that $\supp g$ is compact and
\begin{equation}\label{eq:supp.g}
  \supp g \subset \left\{(x,t) \in \R^{d}\times[0,1] \, \middle| \,
    \abs{x}\leq R+1, \,
    \frac32 \leq \left|\frac{x}{R}+\varphi(t) e_1 \right|, \,
    t \in \left[\frac14,\frac34\right]
    \right\},
\end{equation}
indeed for $t \in [0,\tfrac14] \cup [\tfrac34,1]$, $g(x,t)$ is non vanishing
if $\frac{3}{2} \leq \frac{\abs{x}}{R}
\leq \frac{R+1}{R}$, that is in contraddiction with $R > 2$ given by \eqref{eq:gamma.big}.
Thanks to \Cref{lem:carleman} which can be applied under our assumptions, there exists
$c=c(\norm{\varphi'}_{\infty},\norm{\varphi''}_{\infty}, M_B)>0$ such that for all
$\tau \geq c R^2$
\begin{equation}\label{eq:carleman.proof}
  \frac{\tau^{3/2}}{cR^2}
  \norm*{e^{\tau\left| \frac{x}{R}+\varphi(t)e_1\right|^2}
    g(x,t)}_{L^2(\R^n \times [0,1])} \leq
  \norm*{ e^{\tau\left| \frac{x}{R}+\varphi(t)e_1\right|^2}
    (i\partial_t + \Delta_{\widetilde{A}})g(x,t)}_{L^2(\R^n \times [0,1])}.
\end{equation}
In the following we estimate from above and from below the
quantities in \eqref{eq:carleman.proof}.

We estimate from below the left hand side of
\eqref{eq:carleman.proof}: since 
$\left| \frac{x}{R}
  + \varphi(t)e_1\right| \geq 2$ and $g = \theta_R v$ on $\{\vert
x\vert\leq R+1\}\times [3/8,5/8]$,
we have
\begin{equation}\label{eq:from.below}
  \begin{split}
\norm*{e^{\tau\left| \frac{x}{R}+\varphi(t)e_1\right|^2}
  g(x,t)}_{L^2(\R^n \times [0,1])}^2
  & = \int_0^1\int_{\R^n} e^{2\tau\left| \frac{x}{R}+\varphi(t)e_1\right|^2}\vert g(x,t)\vert^2dxdt  \\
  & \geq e^{8\tau}\int_{3/8}^{5/8}\int_{\vert x\vert\leq R+1} \vert \theta_R(x)v(x,t)\vert^2dxdt \\
  & = e^{8\tau}\int_{3/8}^{5/8}\int_{\vert x\vert\leq R+1}
  \alpha(t)^n\vert \theta_R(x)u(\alpha(t)x,s(t))\vert^2dxdt.
\end{split}
\end{equation}
It is convenient to perform the following change of variables in the
integral at right hand side of \eqref{eq:from.below}:
\begin{equation}\label{eq:change.of.variables}
y  = \alpha(t) x, \quad s(t)=\frac{t}{\gamma(1-t)+t}.
\end{equation}
It is useful to observe that 
\begin{align}\label{eq:for.change.of.variables.1}
  & \frac{1}{\sqrt{\gamma}} \leq \alpha(t) \leq \frac{4}{\sqrt{\gamma}},
  \quad \text{ for all } t \in \left[\frac38,\frac58\right],
  \\
  \label{eq:for.change.of.variables.2}
  & \frac{\gamma}{8}\leq \frac{dt}{ds}(s) =\frac{\gamma}{(1+s\gamma-s)^2} \leq \gamma,
  \quad \text{ for all }s\in 
  \left[\frac{3}{5\gamma+3}, \frac{5}{3\gamma+5}\right]
  = s\left(\left[\frac38,\frac58\right]\right).
\end{align}
From \eqref{eq:from.below} and \eqref{eq:for.change.of.variables.2}  we conclude that 
\begin{equation*}
  \norm*{e^{\tau\left| \frac{x}{R}+\varphi(t)e_1\right|^2}
  g(x,t)}_{L^2(\R^n \times [0,1])}^2
\geq e^{8\tau}\frac{\gamma}{8}\int_{\frac{3}{5\gamma+3}}^{\frac{5}{3\gamma+5}}
\int_{\vert y \vert \leq \alpha(t(s))(R+1)}
\left| \theta_R\left(\frac{y}{\alpha(t(s))}\right)\, u(y,s) \right|^2dyds.
\end{equation*}
Clearly then
\begin{equation}\label{eq:carleman.from.below}
  \norm*{e^{\tau\left| \frac{x}{R}+\varphi(t)e_1\right|^2}
  g(x,t)}_{L^2(\R^n \times [0,1])}^2
\geq e^{8\tau}\frac{\gamma}{8}\int_{\frac{3}{5\gamma+3}}^{\frac{5}{3\gamma+5}}
\int_{\vert y \vert \leq \alpha(t(s))(R+1)}
\left| \theta_R\left(\frac{y}{\alpha(t(s))}\right)\, u(y,0)
\right|^2dyds
+ e^{8\tau}\frac{\gamma}{8} E,
\end{equation}
with
\begin{equation*}
  E = \int_{\frac{3}{5\gamma+3}}^{\frac{5}{3\gamma+5}}
\int_{\vert y \vert \leq \alpha(t(s))(R+1)}
\theta_R^2\left(\frac{y}{\alpha(t(s))}\right)\, \left(\abs{u(y,s)}^2 - \abs{u(y,0)}^2\right) dyds.
\end{equation*}
We estimate from below the first term at right hand side in
\eqref{eq:carleman.from.below}: thanks to \eqref{eq:gamma.big} we have 
$\abs{  [\frac{3}{5\gamma+3}, \frac{5}{3\gamma+5}]
} > 1/(4\gamma)$ and 
thanks to \eqref{eq:initial.mass} and
\eqref{eq:for.change.of.variables.1} we conclude 
\begin{equation}\label{eq:to.conclude.1}
  \begin{split}
& e^{8\tau}\frac{\gamma}{8}\int_{\frac{3}{5\gamma+3}}^{\frac{5}{3\gamma+5}}
\int_{\vert y \vert \leq \alpha(t(s))(R+1)}
\left| \theta_R\left(\frac{y}{\alpha(t(s))}\right)\, u(y,0)
\right|^2dyds
\\ &  \geq
  e^{8\tau}\frac{\gamma}{8}\int_{\frac{3}{5\gamma+3}}^{\frac{5}{3\gamma+5}}
\int_{\vert y \vert \leq \alpha(t(s)) R}
\abs{u(y,0)}^2dyds
 \geq   \frac{e^{8\tau}}{32} \int_{\vert y \vert \leq  R /\sqrt{\gamma}}
\abs{u(y,0)}^2dy
\\ & = \frac{e^{8\tau}}{32}M_u^2.
\end{split}
\end{equation}
To estimate $E$, we observe that from \eqref{eq:main} we get that 
\begin{equation*}
  \frac{d}{dt}|u|^2  = -2\Im (\text{div}(u\overline{\nabla_{A} u}) + V|u|^2),
\end{equation*}
that gives
\begin{equation*}
|u(y,s)|^2-|u(y,0)|^2 = -2 \Im \int_0^s \left(\text{div}\left(u(y,s')\cdot
  \overline{\nabla_A u(y,s')}\right) + V(y,s') |u(y,s')|^2\right)ds'.
\end{equation*}
So
\begin{equation}\label{eq:E}
  \begin{split}
  \abs{E}
  \leq &2 \left\vert \int_{\frac{3}{5\gamma+3}}^{\frac{5}{3\gamma+5}} \int_0^s
    \int_{\vert y \vert \leq \alpha(t(s))(R+1)}
    \theta_R^2\left(\frac{y}{\alpha(t(s))}\right)\, \text{div} \left(u(y,s')\cdot
  \overline{\nabla_A u(y,s')}\right) dyds'ds
   \right\vert
   \\
   &+2 \left\vert \int_{\frac{3}{5\gamma+3}}^{\frac{5}{3\gamma+5}}
     \int_0^s \int_{\vert y \vert \leq \alpha(t(s))(R+1)}
    \theta_R^2\left(\frac{y}{\alpha(t(s))}\right)\,  V(y,s') |u(y,s')|^2 dyds'ds \right\vert 
   = E_1 + E_2.
 \end{split} 
\end{equation}
To estimate $E_1$ we integrate by parts: there is no boundary
contribution thanks to the choice of $\theta_R$.
Thanks to \eqref{eq:gamma.big}, \eqref{eq:control.derivatives.theta}
and 
\eqref{eq:for.change.of.variables.1} we have
\begin{equation}\label{eq:E1}
  \begin{split}
  E_1 & \leq
  4 \left\vert \int_{\frac{3}{5\gamma+3}}^{\frac{5}{3\gamma+5}}
   \frac1{\alpha(t(s))}
    \int_0^s
    \int_{\vert y \vert \leq \alpha(t(s))(R+1)}
    (\theta_R\nabla\theta_R)\left(\frac{y}{\alpha(t(s))}\right)\, u(y,s')\cdot
    \overline{\nabla_A u(y,s')} dyds'ds
  \right\vert
  \\
  & \leq 4\sqrt{\gamma}
  \int_{\frac{3}{5\gamma+3}}^{\frac{5}{3\gamma+5}}
  \int_0^s
  \int_{\vert y \vert \leq 4(R+1)/\sqrt{\gamma}}
  \left\vert u(y,s') 
    \overline{\nabla_A u(y,s')} \right\vert dyds'ds
  \\
  & \leq
  4 \sqrt{\gamma} \left(\frac{5}{3\gamma+5} - \frac{3}{5\gamma
      +3}\right)
  \int_0^{\frac{5}{3\gamma+5}}
  \int_{\vert y \vert  \leq 4(R+1)/\sqrt{\gamma}}
  \left\vert u(y,s') 
    \overline{\nabla_A u(y,s')} \right\vert dyds'
  \\
  & \leq
  \frac{8}{\sqrt{\gamma}} \frac{5}{3\gamma+5}
    \sup_{s' \in [0,1]}
    \int_{\vert y \vert  \leq 4(R+1)/\sqrt{\gamma}}
  \left\vert u(y,s') 
    \overline{\nabla_A u(y,s')} \right\vert dy
  \\ 
  & \leq \frac{16}{\gamma^{3/2}}
  \sup_{s' \in [0,1]}
  \int_{\vert y \vert  \leq 4(R+1)/\sqrt{\gamma}}
  \left\vert u(y,s') 
    \overline{\nabla_A u(y,s')} \right\vert dy
    \\
  & \leq \frac{8}{\gamma^{3/2}}
  \sup_{s' \in [0,1]}
  \int_{\vert y \vert  \leq R_1}
  \left( \abs{u(y,s')}^2 +
    \abs*{\nabla_A u(y,s')}^2 \right) dy = \frac{8}{\gamma^{3/2}} E_u^2.
  \end{split}
\end{equation}
To estimate $E_2$, we reason as above and thanks to \eqref{eq:V.bound},
\eqref{eq:norm.bound} and \eqref{eq:gamma.big} we get
\begin{equation}\label{eq:E2}
  \begin{split}
  E_2  &  \leq \frac{4 M_V}{\gamma} \int_0^{\frac{5}{3\gamma+5}}
  \int_{\vert y \vert \leq 4(R+1)/\sqrt{\gamma}}
  |u(y,s')|^2 dyds' 
  \\ & \leq \frac{8 M_V}{\gamma^2} \sup_{s' \in [0,1]}
  \int_{\vert y \vert \leq 4(R+1)/\sqrt{\gamma}}
    |u(y,s')|^2 dy
    \leq  \frac{8 M_V E_u^2}{\gamma^2}.
  \end{split}
\end{equation}
Thanks to \eqref{eq:gamma.big}, from \eqref{eq:carleman.from.below}--\eqref{eq:E2}
we conclude that
\begin{equation}\label{eq:from.below.final}
  \norm*{e^{\tau\left| \frac{x}{R}+\varphi(t)e_1\right|^2}
  g(x,t)}_{L^2(\R^n \times [0,1])}^2
\geq   \frac{e^{8\tau}}{2^6} M_u^2.
\end{equation}

We estimate from above the right hand side of
\eqref{eq:carleman.proof}: from \eqref{3eq:2appell} and \eqref{eq:defn.g}
we have
\begin{align*}
  (i&\partial_t+\Delta_{\widetilde A} )g (x,t) = -\widetilde{V}(x,t) g(x,t) 
  \\
    &+\theta_R(x) \Big[i\varphi'(t)
      \partial_{x_1} \eta\left(\tfrac{x}{R}+
      \varphi (t) e_1 \right) v(x,t)
      +
      \tfrac{1}{R^{2}} \left(\Delta \eta \left(\tfrac{x}{R} +
      \varphi (t) e_1 \right)\right) v(x,t)
      + \tfrac{2}{R} \nabla \eta\left(\tfrac{x}{R} +
      \varphi (t) e_1 \right) \cdot
      \nabla_{\widetilde A}
      v(x,t) \Big] \\
    & + \eta\left(\tfrac{x}{R}+
      \varphi (t) e_1 \right)
      \left[(\Delta \theta_R(x))  v(x,t) + 2\nabla\theta_R(x) \cdot
      \nabla_{\widetilde A}
      v(x,t)\right] \\
    & + \tfrac{2}{R} \nabla \theta_R(x) \cdot \nabla \eta \left(\tfrac{x}{R} +
      \varphi (t) e_1 \right) v(x,t) \\ 
    & =: F_1(x,t)+F_2(x,t)+F_3(x,t) + F_4(x,t).
\end{align*}
Consequently,
\begin{equation}\label{eq:from.above}
    \norm*{ e^{\tau\left| \frac{x}{R}+\varphi(t)e_1\right|^2}
      (i\partial_t + \Delta_{\widetilde{A}})g(x,t)}_{L^2(\R^n \times [0,1])}
    \leq
    \sum_{i=1}^4
        \norm*{ e^{\tau\left| \frac{x}{R}+\varphi(t)e_1\right|^2}
          F_i(x,t)}_{L^2(\R^n \times [0,1])}.
\end{equation}
We estimate separately the terms at right hand side of the previous inequality.
It is useful to observe that
\begin{equation}\label{eq:for.change.of.variables.3}  
  \frac{1}{\sqrt{\gamma}} \leq \alpha(t) \leq \frac{4}{\sqrt{\gamma}},
  \quad
  0\leq \beta(t) 
    \leq
  \frac{1}{1-t+\gamma^{-1}t} \leq 4,
  \quad \text{ for all } t \in \left[\frac14,\frac34\right],
\end{equation}
Thanks to \eqref{eq:V.bound}, \eqref{eq:gamma.big}, \eqref{eq:A.V.appell}, \eqref{eq:supp.g}
and \eqref{eq:for.change.of.variables.3}, we have
$\norm{\widetilde{V}}_{L^\infty(\supp g)} \leq
16 M_V/\gamma$ and  
\begin{equation}\label{eq:from.above.1}
  \begin{split}
        \norm*{ e^{\tau\left| \frac{x}{R}+\varphi(t)e_1\right|^2}
          F_1(x,t)}_{L^2(\R^n \times [0,1])} 
        &\leq \frac{16 M_V}{\gamma}
          \norm*{e^{\tau\left| \frac{x}{R}+\varphi(t)e_1\right|^2}
            g(x,t)}_{L^2(\R^n \times [0,1])}
        \\ &\leq
          \frac{2^{16} E_u}{M_u}
          \norm*{e^{\tau\left| \frac{x}{R}+\varphi(t)e_1\right|^2}
            g(x,t)}_{L^2(\R^n \times [0,1])}.
        \end{split}
      \end{equation}
Observe that in the support of $F_2$ we have $\abs*{\frac{x}{R} +
  \varphi(t)e_1} \leq 2$, and thanks to \eqref{eq:gamma.big}, 
\eqref{eq:control.derivatives.theta}--%
\eqref{eq:control.derivatives.varphi} and \eqref{eq:supp.g}
we estimate
\begin{equation} \label{eq:F_2}
  \norm*{ e^{\tau\left| \frac{x}{R}+\varphi(t)e_1\right|^2}
    F_2(x,t)}_{L^2(\R^n \times [0,1])}^2
  \leq 2^{14} e^{8\tau}
  \int_{1/4}^{3/4}
  \int_{\vert x \vert \leq R+1} \left(\abs{v(x,t)}^2
  + \frac{1}{R^{2}} \abs{\nabla_{\widetilde A} v}^2\right)dxdt
  = F_{21} + F_{22}.
\end{equation}
We use again the change of variables in
\eqref{eq:change.of.variables}: we observe that
\begin{equation}
  \label{eq:for.change.of.variables.4}
  \frac{\gamma}{16}\leq \frac{dt}{ds}(s) 
  \leq \gamma,
  \quad \text{ for all }s\in 
  \left[\frac{1}{3\gamma+1}, \frac{3}{\gamma+3}\right]
  = s\left(\left[\frac14,\frac34\right]\right).
\end{equation}
Thanks to \eqref{eq:for.change.of.variables.4}, we have that
\begin{equation}\label{eq:join.1}
  F_{21} 
  \leq 2^{14}e^{8\tau} \gamma
  \int_{\frac{1}{3\gamma+1}}^{\frac{3}{\gamma+3}}
  \int_{\abs{y} \leq \alpha(t(s)) (R+1)}
  \abs{u(y,s)}^2 dy ds.
\end{equation}
Thanks to \eqref{eq:defn.v}, \eqref{eq:for.change.of.variables.3} and \eqref{eq:for.change.of.variables.4},
\begin{equation*}
  \begin{split}
    F_{22} 
    & = {2^{14} e^{8\tau}}\int_{1/4}^{3/4}\int_{\vert x\vert\leq R+1}
    \frac{\alpha(t)^n}{R^{2}} \abs*{\alpha(t)(\nabla_A u)(\alpha(t)x,
      s(t))-\frac{i}{2}\beta(t)x \, u (\alpha(t)x, s(t))}^2 dxdt \\
    & \leq  2^{14} e^{8\tau}  \gamma
    \int_{\frac{1}{3\gamma+1}}^{\frac{3}{\gamma+3}}
    \int_{|y|\leq \alpha(t(s))(R+1)}
    \frac{1}{R^2}\abs*{\alpha(t(s))\nabla_A u(y,s) -
      \frac{i \beta(t(s)) y}{2\alpha(t(s))} u(y,s)}^2 dyds \\
    & \leq 2^{14} e^{8\tau} \gamma
    \int_{\frac{1}{3\gamma+1}}^{\frac{3}{\gamma+3}}
    \int_{|y|\leq \alpha(t(s))(R+1)}
    \left( \frac{32}{\gamma R^2}|\nabla_A u(y,s)|^2 +
      8\left(1+\frac{1}{R}\right)^2|u(y,s)|^2 \right) dyds.
  \end{split}
\end{equation*}
Thanks to \eqref{eq:gamma.big}, from the last inequality we conclude that
\begin{equation}\label{eq:join.2}
  F_{22} \leq 2^{19} e^{8\tau} \gamma
  \int_{\frac{1}{3\gamma+1}}^{\frac{3}{\gamma+3}}
  \int_{|y|\leq \alpha(t(s))(R+1)}
  \left( |u(y,s)|^2 + |\nabla_A u(y,s)|^2 \right) dyds.
\end{equation}
From \eqref{eq:norm.bound}, \eqref{eq:gamma.big}, 
\eqref{eq:join.1},
\eqref{eq:join.2} and
since $\abs{[\frac{1}{3\gamma+1},\frac{3}{\gamma+3}]} \leq
\frac{4}{\gamma}$, we get
\begin{equation}\label{eq:from.above.2}
   \norm*{ e^{\tau\left| \frac{x}{R}+\varphi(t)e_1\right|^2}
    F_2(x,t)}_{L^2(\R^n \times [0,1])}^2
 \leq 2^{22} e^{8\tau}
  \sup_{s \in [0,1]}
  \int_{|y|\leq R_1}
  \left( |u(y,s)|^2 + |\nabla_A u(y,s)|^2 \right) dy
  \leq
  2^{22} e^{8\tau} E_u^2.
\end{equation}
We treat now the term in $F_3$. We observe that in its support we
have $R \leq \abs{x}\leq R+1$ and $|\frac{x}{R}+\varphi(t)e_1|\leq 6$
thanks to \eqref{eq:control.derivatives.varphi} and since $R>2$. 
Thanks to \eqref{eq:control.derivatives.theta} and \eqref{eq:control.derivatives.eta} we have
\begin{equation*}
  \norm*{ e^{\tau\left| \frac{x}{R}+\varphi(t)e_1\right|^2}
    F_3(x,t)}_{L^2(\R^n \times [0,1])}^2
  \leq
  8 e^{72\tau}\int_{1/4}^{3/4}
  \int_{R \leq \vert x \vert \leq R+1}
  (\abs{v(x,t)}^2 + \abs{\nabla_{\widetilde A} v(x,t)}^2)dxdt
  =: F_{31} + F_{32}.
\end{equation*}
Using again the change of coordinates \eqref{eq:change.of.variables}
and reasoning as in the estimate \eqref{eq:join.1}, we have
\begin{equation}\label{eq:F31}
  F_{31} \leq 
  8 e^{72\tau} \gamma
  \int_{\frac{1}{3\gamma+1}}^{\frac{3}{\gamma+3}}
  \int_{R \leq \frac{\abs{y}}{\alpha(t(s))} \leq  R+1}
  \abs{u(y,s)}^2 dy ds.
\end{equation}
Since $R^2 = R_0^2 \gamma$, $R>2$, thanks to \eqref{eq:defn.v}, \eqref{eq:for.change.of.variables.3},
\eqref{eq:for.change.of.variables.4} and  reasoning as in the estimate
\eqref{eq:join.2} we get
\begin{equation}\label{eq:F32}
  \begin{split}
  F_{32} & \leq
    8 e^{72\tau} \gamma^2 R_0^2 
  \int_{\frac{1}{3\gamma+1}}^{\frac{3}{\gamma+3}}
  \int_{R \leq \frac{\abs{y}}{\alpha(t(s))} \leq  R+1}
  \left(\frac{32}{\gamma R^2} \abs{\nabla_{A} u(y,s)}^2 +
    8 \left(1+\frac{1}{R}\right)^2 \abs{u(y,s)}^2\right) \, dyds
  \\
  & \leq
      2^{8} e^{72\tau} \gamma^2 R_0^2 
  \int_{\frac{1}{3\gamma+1}}^{\frac{3}{\gamma+3}}
  \int_{R \leq \frac{\abs{y}}{\alpha(t(s))} \leq  R+1}
  \left(  \abs{u(y,s)}^2 + \frac{1}{\gamma} \abs{\nabla_{A} u(y,s)}^2
    \right) \, dyds.  
\end{split}
\end{equation}
From \eqref{eq:F31} and \eqref{eq:F32}, we have 
\begin{equation*}
    \norm*{ e^{\tau\left| \frac{x}{R}+\varphi(t)e_1\right|^2}
    F_3(x,t)}_{L^2(\R^n \times [0,1])}^2
  \leq
  2^{9}   e^{72\tau} \gamma^2 R_0^2   \int_{\frac{1}{3\gamma+1}}^{\frac{3}{\gamma+3}}
  \int_{ R\leq \frac{|y|}{\alpha(t(s))} \leq R+1}
  \left( |u(y,s)|^2+\gamma^{-1}|\nabla_{A} u(y,s)|^2\right) dyds.
\end{equation*}
The length of the above space integration region is $\alpha(t(s))$. In
order to write it in terms of $\gamma$,
we see by \eqref{eq:change.of.variables} and
\eqref{eq:for.change.of.variables.3}, and since
$\alpha(t(s))\sqrt{\gamma} = 1 + s(\gamma -1)$ that
\begin{equation*}
  \left\{(y,s) \, \middle| \, \alpha(t(s))R\leq |y| \leq \alpha(t(s))(R+1), s\in
    \left[{\frac{1}{3\gamma+1}},{\frac{3}{\gamma+3}}\right]  \right\}
  \subset
  \left\{ (y,s) \,  \middle| \, \big\vert \abs{y} -R_0 -R_0s\gamma\big\vert
      \leq \frac{4(R_0+1)}{\sqrt{\gamma}} \right\},
  \end{equation*}
therefore
\begin{equation}\label{eq:from.above.3}
  \begin{split}
     & \norm*{ e^{\tau\left| \frac{x}{R}+\varphi(t)e_1\right|^2}
       F_3(x,t)}_{L^2(\R^n \times [0,1])}^2
     \\ & 
\leq  2^9   e^{72\tau} \gamma^2 R_0^2
    \int_{\frac{1}{3\gamma+1}}^{\frac{3}{\gamma+3}}
    \int_{||y|-R_0-R_0s\gamma|<\frac{4(R_0+1)}{\sqrt{\gamma}}}
    \left(\abs{u(y,s)}^2 + \gamma^{-1} \abs{\nabla_A u (y,s)}^2
    \right) dyds.
  \end{split}
\end{equation}
Finally, we treat the term in $F_4$: we reason analogously as done in
the estimates of the terms in $F_3$ and $F_4$. Thanks to
\eqref{eq:norm.bound}, \eqref{eq:gamma.big},
\eqref{eq:control.derivatives.theta} and
\eqref{eq:control.derivatives.eta} we have
\begin{equation}\label{eq:from.above.4}
  \begin{split}
    & \norm*{ e^{\tau\left| \frac{x}{R}+\varphi(t)e_1\right|^2}
      F_4(x,t)}_{L^2(\R^n \times [0,1])}^2
    \\ &
    \leq
    4   e^{8 \tau} 
    \int_{1/4}^{3/4} \int_{R\leq \abs{x}\leq R+1} \abs{v(x,t)}^2 \, dxdt
    \leq
    4 e^{8 \tau} \gamma
    \int_{\frac{1}{3\gamma+1}}^{\frac{3}{\gamma+3}}
    \int_{R\leq \frac{\abs{y}}{\alpha(t(s))} \leq R+1}
    \abs{u(y,s)}^2 \,dyds
    \\ &
    \leq
    2^4 e^{8\tau} \sup_{s\in [0,1]} \int_{\abs{y}\leq R_1}
    \abs{u(y,s)}^2\,dyds \leq 2^4 e^{8\tau} E_u^2.
  \end{split}
\end{equation}

Gathering \eqref{eq:carleman.proof}, \eqref{eq:from.above}, \eqref{eq:from.above.1},
\eqref{eq:from.above.2}, \eqref{eq:from.above.3} and \eqref{eq:from.above.4},
we conclude that 
\begin{equation}\label{eq:from.above.final}
  \begin{split}
  &   \frac{\tau^{3/2}}{cR^2}
  \norm*{e^{\tau\left| \frac{x}{R}+\varphi(t)e_1\right|^2}
    g(x,t)}_{L^2(\R^n \times [0,1])} \leq
  \norm*{ e^{\tau\left| \frac{x}{R}+\varphi(t)e_1\right|^2}
    (i\partial_t + \Delta_{\widetilde{A}})g(x,t)}_{L^2(\R^n \times [0,1])}
  \\ & \leq
    \frac{2^{16} E_u}{M_u}
       \norm*{e^{\tau\left| \frac{x}{R}+\varphi(t)e_1\right|^2}
    g(x,t)}_{L^2(\R^n \times [0,1])}
  +
  2^{12} e^{4\tau} E_u
  \\ & \quad +
  \left(2^9   e^{72 \tau} \gamma^2 R_0^2
    \int_{\frac{1}{3\gamma+1}}^{\frac{3}{\gamma+3}}
    \int_{||y|-R_0-R_0s\gamma|<\frac{4(R_0+1)}{\sqrt{\gamma}}}
    \left(\abs{u(y,s)}^2 + \gamma^{-1} \abs{\nabla_A u (y,s)}^2\right)
    dyds\right)^{\frac{1}{2}}.
\end{split}
\end{equation}

We set $\tau := 64 c R^2$. Thanks to \eqref{eq:gamma.big}, we have 
\begin{equation*}
   \frac{2^{16}E_u}{M_u} \leq \frac{\tau^{3/2}}{2cR^2}.
\end{equation*}
By \eqref{eq:from.below.final}, from \eqref{eq:from.above.final} we get
\begin{equation}
  \begin{split}
  \label{eq:1}
  &2^5 e^{4\tau}\sqrt{c}RM_u = \frac{\tau^{\frac32}}{c R^2} \frac{e^{4\tau}M_u}{2^4}
  \\ & \leq
  2^{12} e^{4\tau} E_u
  +
  \left(2^9   e^{72 \tau} \gamma^2 R_0^2
    \int_{\frac{1}{3\gamma+1}}^{\frac{3}{\gamma+3}}
    \int_{||y|-R_0-R_0s\gamma|<\frac{4(R_0+1)}{\sqrt{\gamma}}}
    \left(\abs{u(y,s)}^2 + \gamma^{-1} \abs{\nabla_A u (y,s)}^2\right)
    dyds\right)^{\frac{1}{2}}.
\end{split}
\end{equation}
Thanks to \eqref{eq:gamma.big}, we have
\begin{equation*}
  2^{12} e^{4\tau} E_u \leq  2^4 e^{4\tau}\sqrt{c}RM_u,
\end{equation*}
so we conclude that 
\begin{equation*}
  2^8 e^{8\tau} c R^2 M_u^2  \leq
  2^9   e^{72 \tau} \gamma^2 R_0^2
    \int_{\frac{1}{3\gamma+1}}^{\frac{3}{\gamma+3}}
    \int_{||y|-R_0-R_0s\gamma|<\frac{4(R_0+1)}{\sqrt{\gamma}}}
    \left(\abs{u(y,s)}^2 + \gamma^{-1} \abs{\nabla_A u (y,s)}^2\right)
    dyds,
\end{equation*}
that is to say
\begin{equation*}
  M_u^2  \leq
  \frac{2   e^{2^{12} c R_0^2 \gamma}}{c}  \gamma
    \int_{\frac{1}{3\gamma+1}}^{\frac{3}{\gamma+3}}
    \int_{||y|-R_0-R_0s\gamma|<\frac{4(R_0+1)}{\sqrt{\gamma}}}
    \left(\abs{u(y,s)}^2 + \gamma^{-1} \abs{\nabla_A u (y,s)}^2\right)
    dyds.
\end{equation*}
Consequently, for $C = C(M_B) = \max(2^{12}c, 2 c^{-1}) >0$ we have
\begin{equation}\label{eq:almost.the.end}
  M_u^2  \leq
  C e^{ C R_0^2 \gamma}  \gamma
    \int_{\frac{1}{4\gamma}}^{\frac{3}{\gamma}}
    \int_{||y|-R_0-R_0s\gamma|<\frac{4(R_0+1)}{\sqrt{\gamma}}}
    \left(\abs{u(y,s)}^2 + \gamma^{-1} \abs{\nabla_A u (y,s)}^2\right)
    dyds.
\end{equation}
We let $t:=\gamma^{-1}$: from \eqref{eq:almost.the.end} we get that
\eqref{eq:thesis} holds for all $0< t< t^\ast := (\gamma^\ast)^{-1}$ and
$\rho=R_0$. In order to complete the proof for any $\rho\in[R_0, R_1]$, it is sufficient to repeat the same argument as above, choosing $R=\rho\sqrt\gamma$ in \eqref{eq:erre}.

\bibliographystyle{siam}

\end{document}